\documentclass{amsart}
\usepackage[utf8]{inputenc}
\usepackage{biblatex}
\usepackage{amssymb}
\usepackage{amsmath}
\usepackage{amsthm}
\usepackage{multirow}
\usepackage{longtable}
\usepackage{gensymb}
\usepackage{pst-node}
\usepackage{auto-pst-pdf}
\usepackage{tikz-cd}
\usepackage{tikz,tkz-euclide}
\usepackage{siunitx}
\usepackage{pgfplots}
\usepackage{float} 
\usepackage{xcolor}
\usepackage{textcomp}

\pgfplotsset{compat=1.17}

\addbibresource{Bibfile.bib}
\setlength{\parindent}{0em}

\theoremstyle{plain}
\newtheorem{prop}{Proposition}[section]

\newtheorem{thm}{Theorem}[section]

\theoremstyle{definition}
\newtheorem{eg}{Example}[section]
\newtheorem{rem}{Remark}[section]
\newtheorem{defn}{Definition}[section]

\newcommand{\F}{\mathbb F}


\title{The Mathieu group $M_{23}$ as additive functions on the finite field of size ${2^{11}}$}
\author[Y.B.,B.H.,R.H.,R.L., S.L., F.L., M.S., N.W,J.Y.,L.Z.,N.Z.]{ Yiming Bing*, Bright Hu, Ronni Hu, Rhianna Li, Stefan Lu, Finn McDonald, Michael Sun, Nicholas Wolfe, Joshua Yao, Leon Zhou, Nathan Zhou}

\begin{document}


                 
       


\begin{abstract}
   We explicitly extend the standard permutation action of the Mathieu group $M_{23}$ on a 23 element set $C=C_{23}$ contained in a finite field of $2^{11}$ elements $\mathbb{F}_{2^{11}}$ to additive functions on this finite field. That is we represent $M_{23}$ as functions $\varphi:\mathbb{F}_{2^{11}}\to \mathbb{F}_{2^{11}}$ such that $\varphi(x+y)=\varphi(x)+\varphi(y)$ and $\varphi|_{C}$ is the standard permutation action. We give explicit $11\times 11$ matrices for the pair of standard generators of order $23$ and order $5$, as well as many tables to help facilitate future calculations.
   
\end{abstract}

\maketitle
\section*{Introduction}
The Mathieu group $M_{23}$ was first introduced in 1873 in \cite{M1873}. (We translated this paper to English \cite{trans}). Since then, there has been much interest surrounding this group; for example, it is one of the sporadic simple finite groups in the classification program of finite simple groups and the only sporadic group for which the Inverse Galois Problem remains open. 

In 1957 \cite{P1957} proved the existence of an irreducible faithful representation of $M_{23}$ as $11\times 11$ matrices over $\mathbb{Z}/2\mathbb{Z}$ using the characterisation of $M_{23}$ as the automorphism group of a Steiner system, where certain key lemmas were left to computer calculations and omitted our of necessity. There were also many mentions of such a definition which restricts to the standard permutation action that can be found online such as Wikipedia or Mathoverflow but we were not able to find an explicit construction of this elegant characterisation in the literature. 

That is, let $C:=C_{23}$ be the $23$ element subgroup of the multiplicative group of the finite field of $2^{11}$ elements $\F_{2^{11}}$. We want to represent $M_{23}$ as invertible functions
$$\varphi: \F_{2^{11}}\to\F_{2^{11}}$$
such that $\varphi(C)\subseteq C$ and $\varphi(x+y)=\varphi(x)+\varphi(y)$ for all $x,y\in\F_{2^{11}}$ and the restriction
$$\varphi|_C:C\to C$$
is a corresponding permutation from $M_{23}$ acting transitively on a $23$ element set.

In this article we proceed to construct a representation satisfying these criteria very explicitly in the hopes that we and other young mathematicians can gain a better understanding and make an easier transition into the subject matter. We do not mention Steiner systems nor Golay codes to achieve this representation, which we believe will add much greater accessibility for young readers. All parts of the proof are directly verifiable and carried out by hand.

We also include extensive tables detailing many of the explicit relationships between the elements of the finite structures involved for completeness in the hopes that they also might be useful in future studies. 
We finally conclude with stating matrices for the generators of $M_{23}$.


\section{The Field $F_{2^{11}}$}
We construct $F_{2^{11}}$ using the irreducible polynomial $x^{11}+x^2+1$ mod 2
$$\F_{2^{11}}=F_{2^{11}}[x]/(x^{11}+x^2+1)F_{2^{11}}[x]\cong \{aX^2+bX+c\,|\,a,b,c\in F_{2^{11}}, X^{11}+X^2+1=0\}$$
and with $x \mapsto X$ we express everything as powers of $X$ in the first table. This polynomial appears on a list of irreducible polynomials in \cite{C1935} and our table also serves as an explicit proof that both the polynomial is irreducible and that the quotient by this polynomial is a field with $2^{11}$ elements. The polynomial was chosen because it has the fewest terms with the lowest power of $x$. It is clear that every degree $11$ polynomial will have the $x^{11}$ term and if irreducible will have constant term 1 and at least 3 terms ($x^{11}+1$ is reducible, for example substitute $x=1$). It suffices to check that $x^{11}+x+1$ is reducible.

While it is known the multiplicative group is cyclic (can be expressed as powers of a single element), our table below also proves it explicitly here using the $X$ as the generator. It suffices to check that $X^{23}\neq1$ and $X^{89}\neq1$. The initials track who participated in the calculations. Many of these entries are not needed in our proof but are included for completeness.

\small
\begin{center}
\begin{longtable}[c]{|c|c|c|c|}
\hline
Power& Expression& Binary String& Initials \\
\hline
$X^{11}$&$X^2+1$&$00000000101$& MS\\
$X^{12}$&$X^3+X$&$00000001010$& BH\\
$X^{13}$&$X^4+X^2$&$00000010100$& BH\\
$X^{14}$&$X^5+X^3$&$00000101000$& BH\\
$X^{15}$&$X^6+X^4$&$00001010000$& BH\\
$X^{16}$&$X^7+X^5$&$00010100000$& FM, BH\\
$X^{17}$&$X^8+X^6$&$00101000000$& BH\\
$X^{18}$&$X^9+X^7$&$01010000000$& BH\\
$X^{19}$&$X^{10}+X^8$&$10100000000$& BH\\
$X^{20}$&$X^9+X^2+1$&$01000000101$& MS, BH\\
$X^{21}$&$X^{10}+X^3+X$&$10000001010$& BH\\
$X^{22}$&$X^4+1$&$00000010001$& FM\\
$X^{23}$&$X^5+X$&$00000100010$& FM\\
$X^{24}$&$X^6+X^2$&$00001000100$& FM\\
$X^{25}$&$X^7+X^3$&$00010001000$& FM\\
$X^{26}$&$X^8+X^4$&$00100010000$& FM\\
$X^{27}$&$X^9+X^5$&$01000100000$& FM\\
$X^{28}$&$X^{10}+X^6$&$10001000000$& FM\\
$X^{29}$&$X^7+X^2+1$&$00010000101$& FM\\
$X^{30}$&$X^8+X^3+X$&$00100001010$& FM\\
$X^{31}$&$X^9+X^4+X^2$&$01000010100$& FM\\
$X^{32}$&$X^{10}+X^5+X^3$&$10000101000$&FM\\
$X^{33}$&$X^6+X^4+X^2+1$&$00001010101$& FM\\
$X^{34}$&$X^7+X^5+X^3+X$&$00010101010$& BH\\
$X^{35}$&$X^8+X^6+X^4+X^2$&$00101010100$& BH\\
$X^{36}$&$X^9+X^7+X^5+X^3$&$01010101000$& BH\\
$X^{37}$&$X^{10}+X^8+X^6+X^4$&$10101010000$& BH\\
$X^{38}$&$X^9+X^7+X^5+X^2+1$&$01010100101$& BH\\
$X^{39}$&$X^{10}+X^8+X^6+X^3+X$&$10101001010$& BH\\
$X^{40}$&$X^9+X^7+X^4+1$&$01010010001$& MS, BH\\
$X^{41}$&$X^{10}+X^8+X^5+X$&$10100100010$& BH\\
$X^{42}$&$X^9+X^6+1$&$01001000001$& BH\\
$X^{43}$&$X^{10}+X^7+X$&$10010000010$& BH\\
$X^{44}$&$X^8+1$&$00100000001$& BH\\
$X^{45}$&$X^9+X$&$01000000010$& BH\\
$X^{46}$&$X^{10}+X^2$&$10000000100$& BH\\
$X^{47}$&$X^3+X^2+1$&$00000001101$& BH\\
$X^{48}$&$X^4+X^3+X$&$00000011010$& BH\\
$X^{49}$&$X^5+X^4+X^2$&$00000110100$& BH\\
$X^{50}$&$X^6+X^5+X^3$&$00001101000$& BH\\
$X^{51}$&$X^7+X^6+X^4$&$00011010000$& BH\\
$X^{52}$&$X^8+X^7+X^5$&$00110100000$& BH\\
$X^{53}$&$X^9+X^8+X^6$&0$1101000000$& BH\\
$X^{54}$&$X^{10}+X^9+X^7$&$11010000000$& BH\\
$X^{55}$&$X^{10}+X^8+X^2+1$&$10100000101$& RK\\
$X^{56}$&$X^9+X^3+X^2+X+1$&$01000001111$& BH\\
$X^{57}$&$X^{10}+X^4+X^3+X^2+X$&$10000011110$& BH\\
$X^{58}$&$X^5+X^4+X^3+1$&$00000111001$& BH\\
$X^{59}$&$X^6+X^5+X^4+X$&$00001110010$& BH\\
$X^{60}$&$X^7+X^6+X^5+X^2$&$00011100100$& BH\\
$X^{61}$&$X^8+X^7+X^6+X^3$&$00111001000$& BH\\
$X^{62}$&$X^9+X^8+X^7+X^4$&$01110010000$& BH\\
$X^{63}$&$X^{10}+X^9+X^8+X^5$&$11100100000$& BH\\
$X^{64}$&$X^{10}+X^9+X^6+X^2+1$&$11001000101$& MS,BH\\
$X^{65}$&$X^{10}+X^7+X^3+X^2+X+1$&$10010001111$& BH\\
$X^{66}$&$X^8+X^4+X^3+X+1$&$00100011011$& FM\\
$X^{67}$&$X^9+X^5+X^4+X^2+X$&$01000110110$& BH\\
$X^{68}$&$X^{10}+X^6+X^5+X^3+X^2$&$10001101100$& BH\\
$X^{69}$&$X^7+X^6+X^4+X^3+X^2+1$&$00011011101$& BH\\
$X^{70}$&$X^8+X^7+X^5+X^4+X^3+X$&$00110111010$& BH\\ 
$X^{71}$&$X^9+X^8+X^6+X^5+X^4+X^2$&$01101110100$& BH\\
$X^{72}$&$X^{10}+X^9+X^7+X^6+X^5+X^3$&$11011101000$& BH\\
$X^{73}$&$X^{10}+X^8+X^7+X^6+X^4+X^2+1$&$10111010101$& BH\\
$X^{74}$&$X^9+X^8+X^7+X^5+X^3+X^2+X+1$&$01110101111$& BH\\
$X^{75}$&$X^{10}+X^9+X^8+X^6+X^4+X^3+X^2+X$&$11101011110$& BH\\
$X^{76}$&$X^{10}+X^9+X^7+X^5+X^4+X^3+1$&$11010111001$& BH\\
$X^{77}$&$X^{10}+X^8+X^6+X^5+X^4+X^2+X+1$&$10101110111$& FM\\
$X^{78}$&$X^9+X^7+X^6+X^5+X^3+X+1$&$01011101011$& BH\\
$X^{79}$&$X^{10}+X^8+X^7+X^6+X^4+X^2+X$&$10111010110$& BH\\
$X^{80}$&$X^9+X^8+X^7+X^5+X^3+1$&$01110101001$& BH\\
$X^{81}$&$X^{10}+X^9+X^8+X^6+X^4+X$&$11101010010$& BH\\
$X^{82}$&$X^{10}+X^9+X^7+X^5+1$&$11010100001$& BH\\
$X^{83}$&$X^{10}+X^8+X^6+X^2+X+1$&$10101000111$& BH\\
$X^{84}$&$X^9+X^7+X^3+X+1$&$01010001011$& BH\\
$X^{85}$&$X^{10}+X^8+X^4+X^2+X$&$10100010110$& BH\\
$X^{86}$&$X^9+X^5+X^3+1$&$01000101001$& BH\\
$X^{87}$&$X^{10}+X^6+X^4+X$&$10001010010$& BH\\
$X^{88}$&$X^7+X^5+1$&$00010100001$& FM\\
$X^{89}$&$X^8+X^6+X$&$00101000010$& BH\\
$X^{90}$&$X^9+X^7+X^2$&$01010000100$& BH\\
$X^{91}$&$X^{10}+X^8+X^3$&$10100001000$& BH\\
$X^{92}$&$X^9+X^4+X^2+1$&$01000010101$& BH\\
$X^{93}$&$X^{10}+X^5+X^3+X$&$10000101010$& BH\\
$X^{94}$&$X^6+X^4+1$&$00001010001$& BH\\
$X^{95}$&$X^7+X^5+X$&$00010100010$& BH\\
$X^{96}$&$X^8+X^6+X^2$&$00101000100$& RH, MS, BH\\
$X^{97}$&$X^9+X^7+X^3$&$01010001000$& BH\\
$X^{98}$&$X^{10}+X^8+X^4$&$10100010000$& BH\\
$X^{99}$&$X^9+X^5+X^2+1$&$01000100101$& BH\\
$X^{100}$&$X^{10}+X^6+X^3+X$&$10001001010$& BH\\
\hline
\end{longtable}
\end{center}

\subsection{The Powers of $X^{89}$}

\normalsize
Let $\alpha=X^{89}$, here are the elements of $C$.
\begin{center}
\begin{tabular}{|c|c|c|c|}
\hline
Power& Expression& Binary& Initials \\
\hline
$\alpha$&$X^8+X^6+X$& $00101000010$ & MS\\
$\alpha^2$&$X^7+X^5+X^3+X^2+X$&$00010101110$&MS\\
$\alpha^3$&$X^{10}+X^7+X^3+X^2$&$10010001100$&MS\\
$\alpha^4$&$X^{10}+X^6+X^5+X^4+X^3+X^2$&$10001111100$&MS\\
$\alpha^5$&$X^8+X^7+X^6+X^5+1$&$00111100001$& SL, BH, YB\\
$\alpha^6$&$X^9+X^6+X^5+X^4+X^3+X^2+1$&$01001111101$&ML, RK\\
$\alpha^7$&$X^{10}+X^9+X^8+X^7+X^4+X^2+X$&$11110010110$& RH, MS\\
$\alpha^8$&$X^{10}+X^9+X^8+X^6+X^4+X^3+X^2+X+1$&$11101011111$ & RH, MS\\
$\alpha^9$&$X^{10}+X^8+X^7+X^6+X^5+X^3+X^2+X$&$10111101110$& RL\\
$\alpha^{10}$&$X^{10}+X^7+X+1$&$10010000011$& JY, LZ\\
$\alpha^{11}$&$X^7+X^5+X^2+X+1$&$00010100111$& MS, RH\\
$\alpha^{12}$&$X^{10}+X^9+X^8+X^7+X^6+X^4+X^3+X+1$& $11111011011$ & RH,MS,JY,YB\\
$\alpha^{13}$&$X^8+X^7+X^5+X$&$00110100010$&  RL, MS, JY\\
$\alpha^{14}$&$X^8+X^4+X^3+1$&$00100011001$&MS,LZ,RL\\
$\alpha^{15}$&$X^{10}+X^8+X^7+X^6+X^5+X^4+X^2+1$&$11010011110$&MS,RL,JY\\
$\alpha^{16}$&$X^8+X^6+X^5+X^4+X^3+X$&$00101111010$& MS, BH, RL \\
$\alpha^{17}$&$X^{10}+X^9+X^7+X^6$&$11011000000$ & RH,MS,LZ,YB\\
$\alpha^{18}$&$X^{10}+X^9+X^7+X^6+X^4+X+1$&$11011010011$& YB, BH\\
$\alpha^{19}$&$X^{8}+X^5+X^4+X^3+X^2+X+1$&$00100111111$& NZ, MS\\
$\alpha^{20}$&$X^9+X^5+X^3$&$01000101000$& BH\\
$\alpha^{21}$&$X^{10}+X^9+X^8+X^6+X^4+X^2$&$11101010100$& NW\\
$\alpha^{22}$&$X^{10}+X^5+X^4+X^3+X^2+1$&$10000111101$& YB\\
$\alpha^{23}$&$1$&$00000000001$& MS,BH\\
$\alpha^{24}$&$X^8+X^6+X$&$00101000010$&YB\\
\hline
\end{tabular}
\end{center}
    
    Notes: 1,2,3,4,5,10,20 powers are used to calculate the 23rd power of alpha as one so they are very likely correct. Similarly with 2,5,16,21 (NW). Also checked 11,12,22. 17,18,19,20 all checked.\\

\subsection{Powers of X As Powers of $\alpha$}



%




Let $A=\{1,\alpha,\alpha^2,\dots,\alpha^{10}\}$, we see from the table below that this is a additively independent set. This is because we know the set $\chi=\{1,X,X^2,\dots,X^{10}\}$ is additively independent and we write each $x\in\chi$ as combinations of $a\in A$.

\begin{center}
\begin{tabular}{|c|c|c|}
\hline
Power& Expression& Initials \\
\hline
    $X^0$&$1$& BH, YB \\
    $X^1$&$\alpha^{10}+\alpha^5+\alpha^3+\alpha^2+\alpha$& MS, LS\\
    $X^2$&$\alpha^9+\alpha^7+\alpha^6+1$ & MS, JG, TC, LS, JT\\
    $X^3$&$\alpha^9+\alpha^7+\alpha^6+\alpha^5+\alpha^2+\alpha$ & MS,LZ,NZ\\
    $X^4$&$\alpha^8+\alpha^6+\alpha^5+\alpha^4+\alpha^2+1$& LZ, NW, MS\\
    $X^5$&$\alpha^9+\alpha^3+\alpha$ & RH, MS\\
    $X^6$&$\alpha^{10}+\alpha^8+\alpha^6+\alpha^5$& LZ, FM, TC\\
    $X^7$&$\alpha^{10}+\alpha^9+\alpha^2+\alpha+1$& MS\\
    $X^8$&$\alpha^{8}+\alpha^6+\alpha^3+\alpha^2$ & RH, MS, LZ, NW\\
    $X^9$&$\alpha^{10}+\alpha^9+\alpha^6+\alpha^5+\alpha^4+\alpha^3+1$ & YB\\
    $X^{10}$&$\alpha^{10}+\alpha^9+\alpha^5+\alpha^3$& MS, JG\\
    \hline
\end{tabular}
\end{center}

We list some examples of additive functions on $\F_{2^{11}}$ that preserve $C$.
\begin{eg}
$$x\mapsto \alpha x.$$
\end{eg}
\begin{eg}
$$\mathrm{Fr}(x)=x^2$$
\end{eg}

\subsection{Powers of $\alpha$ as the Sum of Lower Powers of $\alpha$} 

\begin{center}
\begin{tabular}{|c|c|c|}
\hline
    Power & Expression & Initial\\
\hline
    $\alpha^{11}$ & $\alpha^9+\alpha^7+\alpha^6+\alpha^5+\alpha+1 $ & MS, RL\\ 
    $\alpha^{12}$ & $\alpha^{10}+\alpha^8+\alpha^7+\alpha^6+\alpha^2+\alpha$ & MS\\ 
    $\alpha^{13}$ & $\alpha^8+\alpha^6+\alpha^5+\alpha^3+\alpha^2+\alpha+1$ & MS, RL, LZ\\
    $\alpha^{14}$ & $\alpha^9+\alpha^7+\alpha^6+\alpha^4+\alpha^3+\alpha^2+\alpha$ & MS, RH\\
    $\alpha^{15}$ & $\alpha^{10}+\alpha^8+\alpha^7+\alpha^5+\alpha^4+\alpha^3+\alpha^2$ & BH\\
    $\alpha^{16}$ & $\alpha^8+\alpha^7+\alpha^4+\alpha^3+\alpha+1$ & MS\\
    $\alpha^{17}$ & $\alpha^9+\alpha^8+\alpha^5+\alpha^4+\alpha^2+\alpha$ & MS \\
    $\alpha^{18}$ & $\alpha^{10}+\alpha^9+\alpha^6+\alpha^5+\alpha^3+\alpha^2$ & MS \\
    $\alpha^{19}$ & $\alpha^{10}+\alpha^9+\alpha^5+\alpha^4+\alpha^3+\alpha+1$ & MS, BH\\
    $\alpha^{20}$ &
    $\alpha^{10}+\alpha^9+\alpha^7+\alpha^4+\alpha^2+1$ & LZ, BH\\
    $\alpha^{21}$ & 
    $\alpha^{10}+\alpha^9+\alpha^8+\alpha^7+\alpha^6+\alpha^3+1$ & MS, FM\\
    $\alpha^{22}$ &
    $\alpha^{10}+\alpha^8+\alpha^6+\alpha^5+\alpha^4+1$ & LZ, BH\\
    $\alpha^{23}$ &
    $1$ & BH, MS, YB\\
    \hline
\end{tabular}
\end{center}
Here is an example which does not preserve $C$.
\begin{eg}
Example of function defined on 11 elements of C:
$f(1)=1$, $f(\alpha)=\alpha^2$, $f(\alpha^2)=\alpha$, $f(\alpha^i)=\alpha^i$ for $i\neq1,2$ $i<11$.
no adjacent transposition can be together with a full cycle as otherwise we get $S_{23}$.

Explicity,
$f(\alpha^{11})= \alpha^9+\alpha^7+\alpha^6+\alpha^5+\alpha^2+1\notin C$ 
\end{eg}

\section{Extending functions}

\subsection{Powers of $\beta$ as Powers of $\alpha$ and vice versa}
\begin{center}
\begin{tabular}{|c|c|c|c|}
\hline
    Power of $\beta$& Cor. Power of $\alpha$& Power of $\alpha$& Cor. Power of $\beta$\\\hline
$\beta$ & $\alpha^5$ &$\alpha$ & $\beta^{14}$\\
$\beta^2$ & $\alpha^{10}$ &$\alpha^2$ & $\beta^5$\\
$\beta^3$ & $\alpha^{15}$ &$\alpha^3$ & $\beta^{19}$ \\
$\beta^4$ & $\alpha^{20}$ &$\alpha^4$ & $\beta^{10}$\\
$\beta^5$ & $\alpha^2$ &$\alpha^5$ & $\beta$\\
$\beta^6$ & $\alpha^7$ &$\alpha^6$ & $\beta^{15}$ \\
$\beta^7$ & $\alpha^{12}$ &$\alpha^7$ & $\beta^6$ \\
$\beta^8$ & $\alpha^{17}$ & $\alpha^8$ & $\beta^{20}$\\
$\beta^9$ & $\alpha^{22}$ & $\alpha^9$ & $\beta^{11}$\\
$\beta^{10}$ & $\alpha^4$ & $\alpha^{10}$ & $\beta^2$\\
$\beta^{11}$ & $\alpha^9$ & $\alpha^{11}$ & $\beta^{16}$\\
$\beta^{12}$ & $\alpha^{14}$ & $\alpha^{12}$ & $\beta^7$ \\
$\beta^{13}$ & $\alpha^{19}$ & $\alpha^{13}$ & $\beta^{21}$ \\
$\beta^{14}$ & $\alpha$ & $\alpha^{14}$ & $\beta^{12}$\\
$\beta^{15}$ & $\alpha^6$ & $\alpha^{15}$ & $\beta^3$\\
$\beta^{16}$ & $\alpha^{11}$ & $\alpha^{16}$ & $\beta^{17}$\\
$\beta^{17}$ & $\alpha^{16}$ & $\alpha^{17}$ & $\beta^8$ \\
$\beta^{18}$ & $\alpha^{21}$ & $\alpha^{18}$ & $\beta^{22}$\\
$\beta^{19}$ & $\alpha^3$ & $\alpha^{19}$ & $\beta^{13}$\\
$\beta^{20}$ & $\alpha^8$ & $\alpha^{20}$ & $\beta^4$\\
$\beta^{21}$ & $\alpha^{13}$ & $\alpha^{21}$ & $\beta^{18}$\\
$\beta^{22}$ & $\alpha^{18}$ & $\alpha^{22}$ & $\beta^9$\\
$\beta^{23}$ & $\alpha^{23}$ & $\alpha^{23}$ & $\beta^{23}$\\ 
\hline
\end{tabular}
\end{center}

Checked by YB

Let $A=\{1,\alpha,\alpha^2,\dots,\alpha^{10}\}$ and $\chi=\{1,X,X^2,\dots,X^{10}\}$ be the additively independent sets from previous sections.
\begin{defn}
Let $f:C\to C$
by $f(\beta^j)=\beta^{j+1}$. Under the identification 
$$\beta^j\mapsto j$$
$f$ can be given by the cycle
$$(1,2,3,\dots,22,23).$$
\end{defn}
\begin{defn}
Let $g: C\to C$ be defined as
$$(3,17,10,7,9)(4,13,14,19,5)(8,18,11,12,23)(15,20,22,21,16)$$
under the identification
$$\beta^j\mapsto j.$$
\end{defn}

\begin{prop}[Mathieu \cite{M1873}]\label{gens}
The functions $f$ and $g$ generate $M_{23}$ inside the symmetric group $S_{23}$.
\end{prop}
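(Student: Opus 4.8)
The plan is to pin down $G := \langle f, g\rangle \le S_{23}$ by squeezing it between a transitivity lower bound and the short list of possibilities that the prime degree $23$ forces. First I would record the cheap structural facts: $f$ is a single $23$-cycle, so $G$ is transitive; a $23$-cycle is an even permutation and a product of four disjoint $5$-cycles is even, so $G \le A_{23}$; and $g$ visibly has order $5$, so $5 \mid |G|$.

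Next I would upgrade transitivity to $2$-transitivity using the primality of $23$. Burnside's classical theorem on transitive groups of prime degree states that $G$ is either $2$-transitive or contained in the one-dimensional affine group $\mathrm{AGL}_1(23)$, of order $23\cdot 22 = 506 = 2\cdot 11\cdot 23$. The affine alternative is impossible because $5 \mid |G|$ while $5 \nmid 506$; hence $G$ is $2$-transitive and, by the same divisibility, is not contained in $\mathrm{AGL}_1(23)$. I would then invoke the classification of $2$-transitive groups (a consequence of the classification of finite simple groups, or of the older classification of multiply transitive groups of prime degree): the $2$-transitive groups of degree $23$ that are not subgroups of $\mathrm{AGL}_1(23)$ are exactly $M_{23}$, $A_{23}$ and $S_{23}$. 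Together with $G \le A_{23}$ from the parity computation, this leaves precisely $G = M_{23}$ or $G = A_{23}$.

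The crux is therefore to separate $M_{23}$ from $A_{23}$, and this is the step I expect to be the main obstacle, since both are $4$-transitive and no upper bound on $|G|$ can be read off the generators directly. The most self-contained route is to compute $|G|$ by building a stabilizer chain by hand: fix a base point and compute the $G$-orbit (length $23$), then the orbit of the one-point stabilizer $G_1$ on the remaining points via Schreier generators (length $22$), and likewise down to a four-point stabilizer, obtaining successive orbit lengths $23, 22, 21, 20$ and a four-point stabilizer of order $48$. This yields $|G| = 23\cdot 22\cdot 21\cdot 20\cdot 48 = 10200960 = |M_{23}|$, far smaller than $|A_{23}| = 23!/2$, which forces $G = M_{23}$.

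A cheaper variant of the final step avoids the full order computation: it suffices to show that some prime dividing $|A_{23}|$ but not $|M_{23}|$ — for instance $13$, $17$ or $19$ — does not divide $|G|$, equivalently that $G$ contains no element of that order (an order-$13$ element would act as a single $13$-cycle fixing ten points, and so on). Alternatively, if one first checks by hand that $f$ and $g$ satisfy the relations of a known two-generator presentation of $M_{23}$, then $G$ is a nontrivial homomorphic image of the simple group $M_{23}$, hence isomorphic to it, finishing the proof without appealing to the $2$-transitive classification at all. In every version the computational bookkeeping needed to certify the order (or the nonexistence of certain elements, or the presentation relations) is the laborious part; the group-theoretic reductions above are routine.
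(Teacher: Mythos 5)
The paper does not actually prove this proposition: it is quoted from Mathieu's 1873 paper and used as a black box, so there is no internal argument to compare yours against. Judged on its own terms, your chain of reductions is correct as far as it goes: the parity observation gives $\langle f,g\rangle\le A_{23}$, Burnside's theorem on transitive groups of prime degree together with $5\mid|\langle f,g\rangle|$ and $5\nmid 506$ forces $2$-transitivity, and the list of $2$-transitive groups of degree $23$ leaves only $M_{23}$ and $A_{23}$. Be aware that this last list is a consequence of the classification of finite simple groups, which sits oddly with the paper's stated aim of keeping everything elementary and hand-verifiable; but it is a legitimate citation.

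The genuine gap is the step you yourself identify as the crux and then do not carry out: separating $M_{23}$ from $A_{23}$. Asserting that the stabilizer chain has orbit lengths $23,22,21,20$ with a four-point stabilizer of order $48$ is not a computation but a restatement of the conclusion --- if $\langle f,g\rangle$ were $A_{23}$, the identical procedure would return a four-point stabilizer of order $19!/2$, and nothing in your argument excludes that outcome. Each alternative you offer (showing that $13$, $17$ and $19$ do not divide the order; verifying the relations of a two-generator presentation of $M_{23}$) is likewise named rather than performed, and none is short: a hand Schreier--Sims computation for a group of order $10{,}200{,}960$, or the verification of presentation relations that are long words in $f$ and $g$, is a substantial calculation that would have to appear explicitly for this to count as a proof. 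As written, the proposal is a sound strategy whose decisive verification is missing, so it does not yet establish the proposition independently of the reference to Mathieu.
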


\begin{prop}
The function $f$ extends uniquely to an additive function on $\F_{2^{11}}$. Moreover we have
$$f(x)=\beta x$$
for all $x\in\F_{2^{11}}$.
\end{prop}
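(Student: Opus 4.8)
The plan is to exhibit the map $x\mapsto\beta x$ as an additive extension of $f$ and then force uniqueness using a basis of $\F_{2^{11}}$ that sits inside $C$.

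First I would observe that multiplication by any fixed field element is additive: since $\F_{2^{11}}$ is a field, the distributive law gives $\beta(x+y)=\beta x+\beta y$ for all $x,y$, so the map $m_\beta\colon x\mapsto\beta x$ satisfies the required additivity. I would also note in passing that $m_\beta$ is invertible, with inverse $m_{\beta^{-1}}$, and that it preserves $C$, since $\beta=\alpha^5\in C$ by the $\beta$–$\alpha$ table and $C$ is multiplicatively closed. Next I would check that $m_\beta$ actually restricts to $f$ on $C$. Because $\beta=\alpha^5$ has order $23$ (as $\gcd(5,23)=1$ and $\alpha$ generates $C$), the powers $1,\beta,\dots,\beta^{22}$ exhaust $C$, and for each such element $m_\beta(\beta^j)=\beta^{j+1}=f(\beta^j)$ holds directly from the definition of $f$. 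This gives an additive function on $\F_{2^{11}}$ whose restriction to $C$ is $f$, establishing existence.

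For uniqueness I would appeal to the set $A=\{1,\alpha,\alpha^2,\dots,\alpha^{10}\}$, which lies in $C$ and was shown in an earlier subsection to be additively independent, i.e. an $\F_2$-basis of $\F_{2^{11}}$. An additive map on $\F_{2^{11}}$ is exactly an $\F_2$-linear map, hence is completely determined by its values on the basis $A$. Since $A\subseteq C$, the values of \emph{any} additive extension of $f$ are already pinned down on $A$, and therefore everywhere, so at most one additive extension can exist. Combining this with the existence of $m_\beta$ shows that $m_\beta$ is the unique additive extension, which is precisely the asserted formula $f(x)=\beta x$.

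The point requiring care — rather than a genuine obstacle — is the logical role of the inclusion $A\subseteq C$. Uniqueness of an additive extension is not automatic for an arbitrary self-map of $C$; it holds here only because $C$ happens to contain a full $\F_2$-basis, a fact supplied by the table writing each $X^i$ as a combination of powers of $\alpha$. Once that is granted, the entire argument reduces to the distributive law together with the elementary order computation $\mathrm{ord}(\beta)=23$.
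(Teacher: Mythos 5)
Your argument is correct and follows the same route as the paper's proof: existence via the observation that multiplication by $\beta$ is additive (distributivity) and agrees with $f$ on $C$, and uniqueness via the fact that the additively independent set $A=\{1,\alpha,\dots,\alpha^{10}\}\subseteq C$ is an $\F_2$-basis, so any additive extension is already determined on $C$. You simply spell out the details (the order computation for $\beta=\alpha^5$ and the role of $A\subseteq C$) that the paper leaves implicit.
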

\begin{proof}
$f$ is uniquely determined as an additive function by its values on the first 11 powers of alpha, while we note that multiplication by $\beta$ is clearly additive and agrees with $f$ on $C$. Hence $f$ extends uniquely to multiplication by $\beta$ and is additive.
\end{proof}

\begin{thm}The function $g$ extends uniquely to an additive function on $\F_{2^{11}}$.
\end{thm}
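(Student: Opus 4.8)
The plan is to read \emph{additive} as \emph{$\F_2$-linear} and to exploit that $A=\{1,\alpha,\dots,\alpha^{10}\}$ is an $\F_2$-basis of $\F_{2^{11}}$, as established in the previous section. Uniqueness is then immediate: any additive $\varphi\colon\F_{2^{11}}\to\F_{2^{11}}$ is completely determined by its values on the basis $A$, so if $\varphi$ restricts to $g$ on $C$ then in particular $\varphi(\alpha^i)=g(\alpha^i)$ for $0\le i\le 10$, and these eleven values are forced by the cycle definition of $g$. Hence at most one additive extension can exist.

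For existence, I would let $\tilde g$ be the unique $\F_2$-linear map defined by $\tilde g(\alpha^i):=g(\alpha^i)$ for $0\le i\le 10$, where each right-hand value is obtained by translating the label $\beta^j$ appearing in the cycles of $g$ into a power of $\alpha$ via the conversion table. Reading these off gives $\tilde g(1)=\alpha^{17}$, $\tilde g(\alpha)=\alpha^3$, $\tilde g(\alpha^2)=\alpha^{20}$, and so on through $\tilde g(\alpha^{10})=\alpha^{10}$. By construction $\tilde g$ is additive and already agrees with $g$ on the eleven elements $1,\alpha,\dots,\alpha^{10}$ of $C$.

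It remains to verify that $\tilde g$ agrees with $g$ on the other twelve elements $\alpha^{11},\dots,\alpha^{22}$ of $C$; this is the only substantive step. For each such $k$, the table writing $\alpha^k$ as a sum of the basis powers $\alpha^i$ with $i\le 10$ lets me compute $\tilde g(\alpha^k)$ by linearity as the matching sum $\sum_i\tilde g(\alpha^i)$, after which I reduce the result modulo $x^{11}+x^2+1$ to a single power of $\alpha$ and compare it against the value $g(\alpha^k)$ prescribed by the permutation. For instance $\alpha^{11}=\alpha^9+\alpha^7+\alpha^6+\alpha^5+\alpha+1$ yields $\tilde g(\alpha^{11})=\alpha^{14}+\alpha^7+\alpha^8+\alpha^5+\alpha^3+\alpha^{17}$, and a bitwise addition of the corresponding binary strings collapses this to $\alpha^6$, which is indeed $g(\alpha^{11})$ (since $\alpha^{11}=\beta^{16}\mapsto\beta^{15}=\alpha^6$).

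The main obstacle is simply the bookkeeping of these twelve verifications, as there is no structural shortcut available: $g|_C$ is neither a power map nor a field automorphism, since $(\Z/23\Z)^\times$ has order $22$ and therefore contains no element of order $5$, while the Frobenius $x\mapsto x^2$ has order dividing $11$ on $C$. Consequently each of the twelve identities must be checked on its own. Each is nonetheless a finite computation over $\F_2$ fully supported by the tables already compiled, so the check is routine if lengthy, and completing all twelve establishes $\tilde g|_C=g$ and hence the existence of the (unique) additive extension.
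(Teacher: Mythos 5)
Your proposal is correct and follows essentially the same route as the paper: both define the extension by $\F_2$-linearity on the basis $A=\{1,\alpha,\dots,\alpha^{10}\}$ (which gives uniqueness for free) and then verify agreement with $g$ on the remaining twelve elements $\alpha^{11},\dots,\alpha^{22}$ of $C$ by expanding each in the basis and comparing with the permutation's prescribed value. Your worked example for $\alpha^{11}$ matches the paper's computation exactly; the paper simply carries out all twelve verifications in full.
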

\begin{proof} It suffices to show that outputs of $\alpha^{11}$ to $\alpha^{22}$ agree with those determined by additivity on the additively independent set $1,\alpha,\alpha^2,\dots,\alpha^{10}$. The remaining outputs are then defined uniquely by additivity and the inputs in $A$.

$$\begin{aligned}
g(\alpha^{11})
    &=g(\alpha^9+\alpha^{7}+\alpha^{6}+\alpha^{5}+\alpha^1+1)\\
    &=g(\beta^{11}+\beta^{6}+\beta^{15}+\beta+\beta^{14}+\beta^{23})\\
    &=g(\beta^{11})+g(\beta^{6})+g(\beta^{15})+g(\beta)+g(\beta^{14})+g(\beta^{23})\\
    &=\beta^{12}+\beta^{6}+\beta^{20}+\beta+\beta^{19}+\beta^8\\
    &= \alpha^{14}+\alpha^{7}+\alpha^{8}+\alpha^{5}+\alpha^{3}+\alpha^{17}\\         
    &= (\alpha^{9}+\alpha^{7}+\alpha^{6}+\alpha^{4}+\alpha^{3}+\alpha^{2}+\alpha)+\alpha^{7}+\alpha^{8}+\alpha^{5}+\alpha^{3}+\alpha^{17}\\
    &= \alpha^{9}+\alpha^{8}+\alpha^{6}+\alpha^{5}+\alpha^{4}+\alpha^{2}+\alpha+\alpha^{17}\\
    &=\alpha^{9}+\alpha^{8}+\alpha^{6}+\alpha^{5}+\alpha^{4}+\alpha^{2}+\alpha+(\alpha^{9}+\alpha^{8}+\alpha^{5}+\alpha^{4}+\alpha^{2}+\alpha)\\
    &=\alpha^6,\qquad g(\beta^{16})=\beta^{15}, \qquad \text{YB, RL}.
\end{aligned}$$

$$\begin{aligned}
g(\alpha^{12})
    &=g(\alpha^{10}+\alpha^{8}+\alpha^{7}+\alpha^6+\alpha^2+\alpha)\\
    &=g(\beta^{2}+\beta^{20}+\beta^{6}+\beta^{15}+\beta^{5}+\beta^{14})\\
    &=g(\beta^{2})+g(\beta^{20})+g(\beta^{6})+g(\beta^{15})+g(\beta^{5})+g(\beta^{14})\\
    &=\beta^{2}+\beta^{22}+\beta^{6}+\beta^{20}+\beta^{4}+\beta^{19}\\
    &= \alpha^{10}+\alpha^{18}+\alpha^{7}+\alpha^{8}+\alpha^{20}+\alpha^{3}\\                                                 
    &= \alpha^{10}+(\alpha^{10}+\alpha^{9}+\alpha^{6}+\alpha^{5}+\alpha^{3}+\alpha^{2})+\alpha^{7}+\alpha^{8}+\alpha^{20}+\alpha^{3}\\ 
    &=\alpha^{9}+\alpha^{6}+\alpha^{5}+\alpha^{2}+\alpha^{7}+\alpha^{8}+\alpha^{20}\\
    &=\alpha^{9}+\alpha^{6}+\alpha^{5}+\alpha^{2}+\alpha^{7}+\alpha^{8}+(\alpha^{10}+\alpha^{9}+\alpha^{7}+\alpha^{4}+\alpha^{2}+1)\\
    &=\alpha^{6}+\alpha^{5}++\alpha^{8}+(\alpha^{10}+\alpha^{4}+1)\\
    &=\alpha^{10}+\alpha^{8}+\+\alpha^{6}+\alpha^{5}+\alpha^{4}+1\\
    &=\alpha^{22}, \qquad g(\beta^7)=\beta^9,\qquad \text{YB, RL} 
\end{aligned}$$                               
          
$$\begin{aligned}
g(\alpha^{13})
    &=g(\alpha^{8}+\alpha^{6}+\alpha^{5}+\alpha^{3}+\alpha^{2}+\alpha+1)\\
    &=g(\beta^{20}+\beta^{15}+\beta+\beta^{19}+\beta^{5}+\beta^{14}+\beta^{23})\\
    &=\beta^{22}+\beta^{20}+\beta+\beta^{5}+\beta^{4}+\beta^{19}+\beta^{8}\\
    &= \alpha^{18}+\alpha^{8}+\alpha^{5}+\alpha^2+\alpha^{20}+\alpha^{3}+\alpha^{17}\\
    &= (\alpha^{10}+\alpha^{9}+\alpha^{6}+\alpha^{5}+\alpha^{3}+\alpha^{2})+\alpha^{8}+\alpha^{5}+\alpha^{2}+\alpha^{3}+\alpha^{20}+\alpha^{17}\\
    &= \alpha^{10}+\alpha^{9}+\alpha^{8}+\alpha^{6}+(\alpha^{10}+\alpha^{9}+\alpha^{7}+\alpha^{4}+\alpha^{2}+1)+\alpha^{17}\\
    &= \alpha^{8}+\alpha^{7}+\alpha^{6}+\alpha^{4}+\alpha^{2}+1+(\alpha^{9}+\alpha^{8}+\alpha^{5}+\alpha^{4}+\alpha^{2}+\alpha)\\
    &= \alpha^{9}+\alpha^{7}+\alpha^{6}+\alpha^{5}+\alpha+1\\
    &= \alpha^{11},\qquad g(\beta^{21})=\beta^{16},\qquad\text{YB, RL, LZ}
\end{aligned}$$ 

$$\begin{aligned}
g(\alpha^{14})
    &=g(\alpha^{9}+\alpha^{7}+\alpha^{6}+\alpha^{4}+\alpha^{3}+\alpha^{2}+\alpha)\\
    &=g(\beta^{11}+\beta^{6}+\beta^{15}+\beta^{10}+\beta^{19}+\beta^{5}+\beta^{14})\\
    &=g(\beta^{11})+g(\beta^{6})+g(\beta^{15})+g(\beta^{10})+g(\beta^{19})+g(\beta^{5})+g(\beta^{14})\\
    &=\beta^{12}+\beta^{6}+\beta^{20}+\beta^{7}+\beta^{5}+\beta^{4}+\beta^{19}\\
    &=\alpha^{14}+\alpha^{7}+\alpha^{8}+\alpha^{12}+\alpha^{2}+\alpha^{20}+\alpha^{3}\\
    &=(\alpha^{9}+\alpha^{7}+\alpha^{6}+\alpha^{4}+\alpha^{3}+\alpha^{2}+\alpha)+\alpha^{7}+\alpha^{8}+\alpha^{2}+\alpha^{3}+\alpha^{12}+\alpha^{20}\\
    &=\alpha^{9}+\alpha^{8}+\alpha^{6}+\alpha^{4}+\alpha+(\alpha^{10}+\alpha^{8}+\alpha^{7}+\alpha^{6}+\alpha^{2}+\alpha)+\alpha^{20}\\
    &=\alpha^{10}+\alpha^{9}+\alpha^{7}+\alpha^{4}+\alpha^{2}+(\alpha^{10}+\alpha^{9}+\alpha^{7}+\alpha^{4}+\alpha^{2}+1)\\
    &= 1\\
    &=\alpha^{23},\qquad g(\beta^{12})=\beta^{23},\qquad\text{YB, RL, NZ}
\end{aligned}$$ 

$$\begin{aligned}
g(\alpha^{15})&=g(\alpha^{10}+\alpha^{8}+\alpha^{7}+\alpha^{5}+\alpha^{4}+\alpha^{3}+\alpha^{2})\\
&=g(\beta^{2}+\beta^{20}+\beta^{6}+\beta+\beta^{10}+\beta^{19}+\beta^{5})\\
&=g(\beta^{2})+g(\beta^{20})+g(\beta^{6})+g(\beta)+g(\beta^{10})+g(\beta^{19})+g(\beta^{5})\\
&=\beta^{2}+\beta^{22}+\beta^{6}+\beta+\beta^{7}+\beta^{5}+\beta^{4}\\
&=\alpha^{10}+\alpha^{18}+\alpha^{7}+\alpha^{5}+\alpha^{12}+\alpha^{2}+\alpha^{20}\\
&=\alpha^{10}+(\alpha^{10}+\alpha^{9}+\alpha^{6}+\alpha^{5}+\alpha^{3}+\alpha^{2})+\alpha^{7}+\alpha^{5}+\alpha^{12}+\alpha^{2}+\alpha^{20}\\
&=\alpha^{9}+\alpha^{7}+\alpha^{6}+\alpha^{3}+\alpha^{12}+\alpha^{20}\\
&=\alpha^{9}+\alpha^{7}+\alpha^{6}+\alpha^{3}+(\alpha^{10}+\alpha^{8}+\alpha^{7}+\alpha^{6}+\alpha^{2}+\alpha)+\alpha^{20}\\
&=\alpha^{10}+\alpha^{9}+\alpha^{8}+\alpha^{3}+\alpha^{2}+\alpha+\alpha^{20}\\
&=\alpha^{10}+\alpha^{9}+\alpha^{8}+\alpha^{3}+\alpha^{2}+\alpha+(\alpha^{10}+\alpha^{9}+\alpha^{7}+\alpha^{4}+\alpha^{2}+1)\\
&=\alpha^{8}+\alpha^{7}+\alpha^{4}+\alpha^{3}+\alpha+1\\
&=\alpha^{16},\qquad g(\beta^3)=\beta^{17},\qquad \text{YB, RL, NZ}
\end{aligned}$$

$$\begin{aligned}
g(\alpha^{16})&=g(\alpha^{8}+\alpha^{7}+\alpha^{4}+\alpha^{3}+\alpha+1)\\
&=g(\beta^{20}+\beta^{6}+\beta^{10}+\beta^{19}+\beta^{14}+\beta^{23})\\
&=g(\beta^{20})+g(\beta^{6})+g(\beta^{10})+g(\beta^{19})+g(\beta^{14})+g(\beta^{23})\\
&=\beta^{22}+\beta^{6}+\beta^{7}+\beta^{5}+\beta^{19}+\beta^{8}\\
&=\alpha^{18}+\alpha^{7}+\alpha^{12}+\alpha^{2}+\alpha^{3}+\alpha^{17}\\
&=(\alpha^{10}+\alpha^{9}+\alpha^{6}+\alpha^{5}+\alpha^{3}+\alpha^{2})+\alpha^{7}+\alpha^{12}+\alpha^{2}+\alpha^{3}+\alpha^{17}\\
&=\alpha^{10}+\alpha^{9}+\alpha^{7}+\alpha^{6}+\alpha^{5}+\alpha^{12}+\alpha^{17}\\
&=\alpha^{10}+\alpha^{9}+\alpha^{7}+\alpha^{6}+\alpha^{5}+(\alpha^{10}+\alpha^{8}+\alpha^{7}+\alpha^{6}+\alpha^{2}+\alpha)+\alpha^{17}\\
&=\alpha^{9}+\alpha^{8}+\alpha^{5}+\alpha^{2}+\alpha+\alpha^{17}\\
&=\alpha^{9}+\alpha^{8}+\alpha^{5}+\alpha^{2}+\alpha+(\alpha^{9}+\alpha^{8}+\alpha^{5}+\alpha^{4}+\alpha^{2}+\alpha)\\
&=\alpha^{4}\\
&=\alpha^4,\qquad g(\beta^{17})=\beta^{10},\qquad\text{YB, RL, NZ}
\end{aligned}$$

$$\begin{aligned}
g(\alpha^{17})&=g(\alpha^{9}+\alpha^{8}+\alpha^{5}+\alpha^{4}+\alpha^{2}+\alpha)\\
&=g(\beta^{11}+\beta^{20}+\beta+\beta^{10}+\beta^{5}+\beta^{14})\\
&=g(\beta^{11})+g(\beta^{20})+g(\beta)+g(\beta^{10})+g(\beta^{5})+g(\beta^{14})\\
&=\beta^{12}+\beta^{22}+\beta+\beta^{7}+\beta^{4}+\beta^{19}\\
&=\alpha^{14}+\alpha^{18}+\alpha^{5}+\alpha^{12}+\alpha^{20}+\alpha^{3}\\
&=(\alpha^{9}+\alpha^{7}+\alpha^{6}+\alpha^{4}+\alpha^{3}+\alpha^{2}+\alpha)+\alpha^{18}+\alpha^{5}+\alpha^{12}+\alpha^{20}+\alpha^{3}\\
&=\alpha^{9}+\alpha^{7}+\alpha^{6}+\alpha^{5}+\alpha^{4}+\alpha^{2}+\alpha+\alpha^{18}+\alpha^{12}+\alpha^{20}\\
&=\alpha^{9}+\alpha^{7}+\alpha^{6}+\alpha^{5}+\alpha^{4}+\alpha^{2}+\alpha+(\alpha^{10}+\alpha^{9}+\alpha^{6}+\alpha^{5}+\alpha^{3}+\alpha^{2})+\alpha^{12}+\alpha^{20}\\
&=\alpha^{10}+\alpha^{7}+\alpha^{4}+\alpha^{3}+\alpha+\alpha^{12}+\alpha^{20}\\
&=\alpha^{10}+\alpha^{7}+\alpha^{4}+\alpha^{3}+\alpha+(\alpha^{10}+\alpha^{8}+\alpha^{7}+\alpha^{6}+\alpha^{2}+\alpha)+\alpha^{20}\\
&=\alpha^{8}+\alpha^{6}+\alpha^{4}+\alpha^{3}+\alpha^{2}+\alpha^{20}\\
&=\alpha^{8}+\alpha^{6}+\alpha^{4}+\alpha^{3}+\alpha^{2}+(\alpha^{10}+\alpha^{9}+\alpha^{7}+\alpha^{4}+\alpha^{2}+1)\\
&=\alpha^{10}+\alpha^{9}+\alpha^{8}+\alpha^{7}+\alpha^{6}+\alpha^{3}+1\\
&=\alpha^{21},\qquad g(\beta^8)=\beta^{18},\qquad \text{YB, NZ, RL}
\end{aligned}$$ 

$$\begin{aligned}
g(\alpha^{18})&=g(\alpha^{10}+\alpha^{9}+\alpha^{6}+\alpha^{5}+\alpha^{3}+\alpha^{2})\\
&=g(\beta^{2}+\beta^{11}+\beta^{15}+\beta+\beta^{19}+\beta^{5})\\
&=g(\beta^{2})+g(\beta^{11})+g(\beta^{15})+g(\beta)+g(\beta^{19})+g(\beta^{5})\\
&=\beta^{2}+\beta^{12}+\beta^{20}+\beta+\beta^{5}+\beta^{4}\\
&=\alpha^{10}+\alpha^{14}+\alpha^{8}+\alpha^{5}+\alpha^{2}+\alpha^{20}\\
&=\alpha^{10}+(\alpha^{9}+\alpha^{7}+\alpha^{6}+\alpha^{4}+\alpha^{3}+\alpha^{2}+\alpha)+\alpha^{8}+\alpha^{5}+\alpha^{2}+\alpha^{20}\\
&=\alpha^{10}+\alpha^{9}+\alpha^{8}+\alpha^{7}+\alpha^{6}+\alpha^{5}+\alpha^{4}+\alpha^{3}+\alpha+\alpha^{20}\\
&=\alpha^{10}+\alpha^{9}+\alpha^{8}+\alpha^{7}+\alpha^{6}+\alpha^{5}+\alpha^{4}+\alpha^{3}+\alpha+(\alpha^{10}+\alpha^{9}+\alpha^{7}+\alpha^{4}+\alpha^{2}+1)\\
&=\alpha^{8}+\alpha^{6}+\alpha^{5}+\alpha^{3}+\alpha^{2}+\alpha+1\\
&=\alpha^{13},\qquad g(\beta^{22})=\beta^{21},\qquad \text{YB, NZ}
\end{aligned}$$ 

$$\begin{aligned}
g(\alpha^{19})&=g(\alpha^{10}+\alpha^{9}+\alpha^{5}+\alpha^{4}+\alpha^{3}+\alpha+1)\\
&=g(\beta^{2}+\beta^{11}+\beta+\beta^{10}+\beta^{19}+\beta^{14}+\beta^{23})\\
&=g(\beta^{2})+g(\beta^{11})+g(\beta)+g(\beta^{10})+g(\beta^{19})+g(\beta^{14})+g(\beta^{23})\\
&=\beta^{2}+\beta^{12}+\beta+\beta^{7}+\beta^{5}+\beta^{19}+\beta^{8}\\
&=\alpha^{10}+\alpha^{14}+\alpha^{5}+\alpha^{12}+\alpha^{2}+\alpha^{3}+\alpha^{17}\\
&=\alpha^{10}+(\alpha^{9}+\alpha^{7}+\alpha^{6}+\alpha^{4}+\alpha^{3}+\alpha^{2}+\alpha)+\alpha^{5}+\alpha^{12}+\alpha^{2}+\alpha^{3}+\alpha^{17}\\
&=\alpha^{10}+\alpha^{9}+\alpha^{7}+\alpha^{6}+\alpha^{5}+\alpha^{4}+\alpha+\alpha^{12}+\alpha^{17}\\
&=\alpha^{10}+\alpha^{9}+\alpha^{7}+\alpha^{6}+\alpha^{5}+\alpha^{4}+\alpha+(\alpha^{10}+\alpha^{8}+\alpha^{7}+\alpha^{6}+\alpha^{2}+\alpha)+\alpha^{17}\\
&=\alpha^{9}+\alpha^{8}+\alpha^{5}+\alpha^{4}+\alpha^{2}+\alpha^{17}\\
&=\alpha^{9}+\alpha^{8}+\alpha^{5}+\alpha^{4}+\alpha^{2}+(\alpha^{9}+\alpha^{8}+\alpha^{5}+\alpha^{4}+\alpha^{2}+\alpha)\\
&=\alpha,\qquad g(\beta^{13})=\beta^{14},\qquad\text{YB, NZ}
\end{aligned}$$ 

$$\begin{aligned}
g(\alpha^{20})&=g(\alpha^{10}+\alpha^{9}+\alpha^{7}+\alpha^{4}+\alpha^{2}+1)\\
&=g(\beta^{2}+\beta^{11}+\beta^{6}+\beta^{10}+\beta^{5}+\beta^{23})\\
&=g(\beta^{2})+g(\beta^{11})+g(\beta^{6})+g(\beta^{10})+g(\beta^{5})+g(\beta^{23})\\
&=\beta^{2}+\beta^{12}+\beta^{6}+\beta^{7}+\beta^{4}+\beta^{8}\\
&=\alpha^{10}+\alpha^{14}+\alpha^{7}+\alpha^{12}+\alpha^{20}+\alpha^{17}\\
&=\alpha^{10}+(\alpha^{9}+\alpha^{7}+\alpha^{6}+\alpha^{4}+\alpha^{3}+\alpha^{2}+\alpha)+\alpha^{7}+\alpha^{12}+\alpha^{20}+\alpha^{17}\\
&=\alpha^{10}+\alpha^{9}+\alpha^{6}+\alpha^{4}+\alpha^{3}+\alpha^{2}+\alpha+\alpha^{12}+\alpha^{20}+\alpha^{17}\\
&=\alpha^{10}+\alpha^{9}+\alpha^{6}+\alpha^{4}+\alpha^{3}+\alpha^{2}+\alpha+(\alpha^{10}+\alpha^{8}+\alpha^{7}+\alpha^{6}+\alpha^{2}+\alpha)+\alpha^{20}+\alpha^{17}\\
&=\alpha^{9}+\alpha^{8}+\alpha^{7}+\alpha^{4}+\alpha^{3}+\alpha^{20}+\alpha^{17}\\
&=\alpha^{9}+\alpha^{8}+\alpha^{7}+\alpha^{4}+\alpha^{3}+(\alpha^{10}+\alpha^{9}+\alpha^{7}+\alpha^{4}+\alpha^{2}+1)+\alpha^{17}\\
&=\alpha^{10}+\alpha^{8}+\alpha^{3}+\alpha^{2}+1+\alpha^{17}\\
&=\alpha^{10}+\alpha^{8}+\alpha^{3}+\alpha^{2}+1+(\alpha^{9}+\alpha^{8}+\alpha^{5}+\alpha^{4}+\alpha^{2}+\alpha)\\
&=\alpha^{10}+\alpha^{9}+\alpha^{5}+\alpha^{4}+\alpha^{3}+\alpha+1\\
&=\alpha^{19},\qquad g(\beta^{4})=\beta^{13},\qquad \text{YB, NZ}
\end{aligned}$$

$$\begin{aligned}
g(\alpha^{21})&=g(\alpha^{10}+\alpha^{9}+\alpha^{8}+\alpha^{7}+\alpha^{6}+\alpha^{3}+1)\\
&=g(\beta^{2}+\beta^{11}+\beta^{20}+\beta^{6}+\beta^{15}+\beta^{19}+\beta^{23})\\
&=g(\beta^{2})+g(\beta^{11})+g(\beta^{20})+g(\beta^{6})+g(\beta^{15})+g(\beta^{19})+g(\beta^{23})\\
&=\beta^{2}+\beta^{12}+\beta^{22}+\beta^{6}+\beta^{20}+\beta^{5}+\beta^{8}\\
&=\alpha^{10}+\alpha^{14}+\alpha^{18}+\alpha^{7}+\alpha^{8}+\alpha^{2}+\alpha^{17}\\
&=\alpha^{10}+(\alpha^{9}+\alpha^{7}+\alpha^{6}+\alpha^{4}+\alpha^{3}+\alpha^{2}+\alpha)+\alpha^{18}+\alpha^{7}+\alpha^{8}+\alpha^{2}+\alpha^{17}\\
&=\alpha^{10}+\alpha^{9}+\alpha^{8}+\alpha^{6}+\alpha^{4}+\alpha^{3}+\alpha+\alpha^{18}+\alpha^{17}\\
&=\alpha^{10}+\alpha^{9}+\alpha^{8}+\alpha^{6}+\alpha^{4}+\alpha^{3}+\alpha+(\alpha^{10}+\alpha^{9}+\alpha^{6}+\alpha^{5}+\alpha^{3}+\alpha^{2})+\alpha^{17}\\
&=\alpha^{8}+\alpha^{5}+\alpha^{4}+\alpha^{2}+\alpha+\alpha^{17}\\
&=\alpha^{8}+\alpha^{5}+\alpha^{4}+\alpha^{2}+\alpha+(\alpha^{9}+\alpha^{8}+\alpha^{5}+\alpha^{4}+\alpha^{2}+\alpha)\\
&=\alpha^{9}\\
&=\alpha^{9},\qquad g(\beta^{18})=\beta^{11},\qquad \text{YB, NZ}
\end{aligned}$$

$$\begin{aligned}
g(\alpha^{22})&=g(\alpha^{10}+\alpha^{8}+\alpha^{6}+\alpha^{5}+\alpha^{4}+1)\\
&=g(\beta^{2}+\beta^{20}+\beta^{15}+\beta+\beta^{10}+\beta^{23})\\
&=g(\beta^{2})+g(\beta^{20})+g(\beta^{15})+g(\beta)+g(\beta^{10})+g(\beta^{23})\\
&=\beta^{2}+\beta^{22}+\beta^{20}+\beta+\beta^{7}+\beta^{8}\\
&=\alpha^{10}+\alpha^{18}+\alpha^{8}+\alpha^{5}+\alpha^{12}+\alpha^{17}\\
&=\alpha^{10}+(\alpha^{10}+\alpha^{9}+\alpha^{6}+\alpha^{5}+\alpha^{3}+\alpha^{2})+\alpha^{8}+\alpha^{5}+\alpha^{12}+\alpha^{17}\\
&=\alpha^{9}+\alpha^{8}+\alpha^{6}+\alpha^{3}+\alpha^{2}+\alpha^{12}+\alpha^{17}\\
&=\alpha^{9}+\alpha^{8}+\alpha^{6}+\alpha^{3}+\alpha^{2}+(\alpha^{10}+\alpha^{8}+\alpha^{7}+\alpha^{6}+\alpha^{2}+\alpha)+\alpha^{17}\\
&=\alpha^{10}+\alpha^{9}+\alpha^{7}+\alpha^{3}+\alpha+\alpha^{17}\\
&=\alpha^{10}+\alpha^{9}+\alpha^{7}+\alpha^{3}+\alpha+(\alpha^{9}+\alpha^{8}+\alpha^{5}+\alpha^{4}+\alpha^{2}+\alpha)\\
&=\alpha^{10}+\alpha^{8}+\alpha^{7}+\alpha^{5}+\alpha^{4}+\alpha^{3}+\alpha^{2}\\
&=\alpha^{15},\qquad g(\beta^{9})=\beta^{3},\qquad \text{YB, NZ}
\end{aligned}$$

\end{proof}

\begin{rem}
The additivity of $g$ was first tested for $\beta:=\alpha$ and found to be not additive. We then tried $\beta:=\alpha^5$ because it was unrelated to $\alpha$ under the squaring function and suspect that those obtained from successive squaring of $\alpha^5$ should also produce an additive $g$ while those which aren't do not.
\end{rem}

\subsection{Matrices}
Since the functions $f$ and $g$ can be extended to additive functions on $\F_{2^{11}}$ we can express them as matrices with respect to the bases $A$ and $\chi$ ordered as they were written in their definitions.

\setcounter{MaxMatrixCols}{11}

$$[g] _{A}^{A} = \begin{pmatrix}
0&1&0&0&0&0&0&0&0&0&0 \\
0&0&0&1&0&0&0&0&0&1&0 \\
1&0&1&1&1&0&0&0&1&1&0 \\
0&1&0&0&0&0&0&0&1&1&0 \\
1&0&1&0&0&0&0&0&0&1&0\\
1&0&0&0&0&1&0&0&1&0&0\\
0&0&0&0&1&0&0&0&1&1&0\\
0&0&1&0&1&0&0&1&0&1&0\\
1&0&0&0&1&0&1&0&0&0&0\\
1&0&1&0&0&0&0&0&1&1&0\\
0&0&1&0&1&0&0&0&1&0&1\\
\end{pmatrix}$$

$$[f] _{A}^{A} = \begin{pmatrix}
0 & 0 & 0 & 0 & 0 & 0 & 1 & 0 & 1 & 0 & 0 \\
0 & 0 & 0 & 0 & 0 & 0 & 1 & 1 & 1 & 1 & 0 \\
0 & 0 & 0 & 0 & 0 & 0 & 0 & 1 & 1 & 1 & 1 \\
0 & 0 & 0 & 0 & 0 & 0 & 0 & 0 & 1 & 1 & 1 \\
0 & 0 & 0 & 0 & 0 & 0 & 0 & 0 & 0 & 1 & 1 \\ 
1 & 0 & 0 & 0 & 0 & 0 & 1 & 0 & 1 & 0 & 1 \\
0 & 1 & 0 & 0 & 0 & 0 & 1 & 1 & 1 & 1 & 0 \\
0 & 0 & 1 & 0 & 0 & 0 & 1 & 1 & 0 & 1 & 1 \\
0 & 0 & 0 & 1 & 0 & 0 & 0 & 1 & 1 & 0 & 1 \\
0 & 0 & 0 & 0 & 1 & 0 & 1 & 0 & 0 & 1 & 0 \\
0 & 0 & 0 & 0 & 0 & 1 & 0 & 1 & 0 & 0 & 1 \\
\end{pmatrix}$$

$$[f] _{\chi}^{\chi} = \begin{pmatrix}
1 & 0 & 0 & 0 & 0 & 1 & 1 & 0 & 0 & 0 & 0 \\
0 & 0 & 0 & 1 & 1 & 1 & 0 & 1 & 1 & 0 & 0 \\
0 & 0 & 1 & 0 & 0 & 1 & 0 & 0 & 0 & 1 & 0 \\
0 & 0 & 0 & 1 & 0 & 1 & 0 & 0 & 0 & 1 & 0 \\
0 & 0 & 0 & 1 & 0 & 1 & 1 & 0 & 0 & 1 & 1 \\
1 & 0 & 0 & 0 & 1 & 1 & 0 & 1 & 1 & 0 & 1 \\
1 & 1 & 0 & 0 & 0 & 0 & 0 & 1 & 1 & 0 & 0 \\
1 & 1 & 1 & 1 & 0 & 0 & 0 & 1 & 1 & 1 & 0 \\
1 & 1 & 1 & 0 & 1 & 0 & 0 & 0 & 0 & 1 & 1 \\
0 & 1 & 1 & 1 & 0 & 0 & 0 & 0 & 0 & 1 & 1 \\
0 & 0 & 1 & 1 & 1 & 1 & 0 & 0 & 1 & 0 & 1 \\
\end{pmatrix}$$

$$[g]_{\chi}^{\chi} = \begin{pmatrix}
0 & 0 & 0 & 1 & 1 & 1 & 1 & 0 & 0 & 0 & 0 \\
0 & 1 & 0 & 0 & 0 & 1 & 0 & 1 & 1 & 0 & 0 \\
0 & 0 & 0 & 0 & 1 & 1 & 0 & 0 & 0 & 1 & 0 \\
0 & 1 & 0 & 1 & 0 & 1 & 0 & 0 & 0 & 1 & 0 \\
0 & 0 & 1 & 1 & 1 & 1 & 1 & 0 & 0 & 1 & 1 \\
0 & 0 & 0 & 0 & 1 & 1 & 0 & 1 & 1 & 0 & 1 \\
1 & 0 & 0 & 1 & 1 & 0 & 0 & 1 & 1 & 0 & 0 \\
1 & 0 & 0 & 1 & 0 & 0 & 0 & 1 & 1 & 1 & 0 \\
0 & 0 & 1 & 0 & 0 & 0 & 0 & 0 & 0 & 1 & 1 \\
1 & 1 & 1 & 0 & 1 & 0 & 0 & 0 & 0 & 1 & 1 \\
1 & 0 & 1 & 1 & 0 & 1 & 0 & 0 & 1 & 0 & 1 \\
\end{pmatrix}$$


\begin{thm}There is a faithful 11 dimensional irreducible representation of $M_{23}$ over $\F_2$ such that the generators $f$ and $g$ are given by the matrices $[f]_{\chi}^{\chi}$ and $[g]_{\chi}^{\chi}$ with respect to $\chi$, while it is given by $[f] _{A}^{A}$ and $[g] _{A}^{A}$ with respect to $A$. The $23$ element subgroup of the multiplicative group of $\F_{2^{11}}$ is preserved by $M_{23}$ under this representation and the restrictions of $f$ to $C$ and $g$ to $C$ under the identification
$$j\mapsto \alpha^{5j}$$
is given by
$$f|_C=(1,2,3,\dots,22,23)$$
and 
$$g|_C=(3,17,10,7,9)(4,13,14,19,5)(8,18,11,12,23)(15,20,22,21,16).$$
Furthermore, 11 is the minimal dimension of a faithful $\F_2$ representation.

\end{thm}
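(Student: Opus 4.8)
The plan is to assemble the representation from the two additive generators and then reduce both the irreducibility and the minimality to a single statement about the order-$23$ element $f$. First I would record that, by the preceding proposition and theorem, $f$ and $g$ are $\mathbb{F}_2$-linear bijections of $\mathbb{F}_{2^{11}}\cong\mathbb{F}_2^{11}$ carrying $C$ onto itself, whose matrices in the basis $A$ are obtained by expressing the images $f(\alpha^i)=\beta\alpha^i$ and $g(\alpha^i)$ (for $i=0,\dots,10$) in $A$ via the tables of Section~1, and whose matrices in the basis $\chi$ follow by conjugating with the change-of-basis matrix between $A$ and $\chi$ recorded there. Set $G=\langle[f],[g]\rangle\le\mathrm{GL}_{11}(\mathbb{F}_2)$. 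Since both generators map $C$ bijectively onto itself, so does every element of $G$, and restriction to $C$ yields a homomorphism $\rho\colon G\to\operatorname{Sym}(C)$. Its image is $\langle f|_C,g|_C\rangle=M_{23}$ by Proposition~\ref{gens}, while its kernel is trivial because a linear map restricting to the identity on $C$ fixes the basis $A\subseteq C$ and is hence the identity on $\mathbb{F}_{2^{11}}$. Thus $\rho$ is an isomorphism $G\xrightarrow{\sim}M_{23}$, and its inverse is the sought faithful $11$-dimensional representation sending $f,g$ to the listed matrices; preservation of $C$ is automatic since $G$ is generated by maps preserving $C$, and the cycle shapes of $f|_C$ and $g|_C$ under $j\mapsto\alpha^{5j}=\beta^j$ are immediate from the definitions.

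For irreducibility I would show that the restriction to the cyclic subgroup $\langle f\rangle$ is already irreducible. Here $f$ is multiplication by $\beta$, an element of multiplicative order $23$; since $2$ has multiplicative order $11$ modulo $23$, the minimal polynomial of $\beta$ over $\mathbb{F}_2$ has degree $11$, so $\mathbb{F}_2[\beta]=\mathbb{F}_{2^{11}}$. Any $\langle f\rangle$-invariant $\mathbb{F}_2$-subspace is then closed under multiplication by every element of $\mathbb{F}_2[\beta]=\mathbb{F}_{2^{11}}$, hence is an $\mathbb{F}_{2^{11}}$-subspace of the one-dimensional $\mathbb{F}_{2^{11}}$-space $\mathbb{F}_{2^{11}}$, and so equals $0$ or everything. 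As every $G$-invariant subspace is in particular $\langle f\rangle$-invariant, the representation is irreducible over $\mathbb{F}_2$.

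For minimality I would again use the element of order $23$. In any faithful $\mathbb{F}_2$-representation the image $N$ of $f$ has order exactly $23$, so its minimal polynomial divides $x^{23}-1$ but not $x-1$; hence $N$, and therefore the characteristic polynomial of the representation, has an irreducible factor other than $x-1$. Over $\mathbb{F}_2$ one has $x^{23}-1=(x-1)p_1(x)p_2(x)$ with $p_1,p_2$ irreducible of degree $11$ (again because $\operatorname{ord}_{23}(2)=11$), so that factor has degree $11$ and the representation has dimension at least $11$. I expect the only genuine labour to be the bookkeeping of the first step --- checking the matrix entries against the tables and the change of basis --- whereas the two substantive points, irreducibility and minimality, both collapse to the single arithmetic fact that $2$ has order $11$ modulo $23$.
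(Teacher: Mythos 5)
Your proposal is correct, and it reaches the conclusion by a genuinely different route on the two substantive points. For faithfulness you make rigorous what the paper only asserts: you construct the restriction homomorphism $\rho\colon\langle [f],[g]\rangle\to\operatorname{Sym}(C)$, identify its image as $M_{23}$ via Proposition~\ref{gens}, and kill the kernel by noting that a linear map fixing $C$ pointwise fixes the basis $A\subseteq C$; the paper simply states that $f$ and $g$ ``will also generate $M_{23}$ as functions,'' so your version is a worthwhile tightening. For irreducibility the paper argues indirectly: it reduces irreducibility to minimality (a proper invariant subspace would yield a faithful representation of smaller dimension, using the simplicity of $M_{23}$ as in the subsequent remark), whereas you prove it directly by observing that $f$ is multiplication by $\beta$, that $\operatorname{ord}_{23}(2)=11$ forces $\F_2[\beta]=\F_{2^{11}}$, and hence that any $\langle f\rangle$-invariant subspace is an $\F_{2^{11}}$-subspace of a one-dimensional $\F_{2^{11}}$-space. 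Your argument buys the stronger fact that the module is already irreducible under the cyclic subgroup $\langle f\rangle$, and it does not invoke simplicity. For minimality the paper counts: $|M_{23}|$ must divide $|\mathrm{GL}_i(\F_2)|$, which requires $23\mid 2^k-1$ for some $k\le i$ and hence $i\ge 11$; you instead use only the existence of an element of order $23$, whose minimal polynomial must pick up one of the two degree-$11$ irreducible factors of $x^{23}-1$ over $\F_2$. Both hinge on the single fact $\operatorname{ord}_{23}(2)=11$, but yours needs less input (an element of order $23$ rather than the full group order) and is arguably cleaner. No gaps; the only caveat is the one you already flag, namely that the matrix entries themselves still have to be checked against the tables.
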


\begin{proof} Since $f$ and $g$ correspond to generators of $M_{23}$ as permutations in $S_{23}$, they will also generate $M_{23}$ as functions on $\F_{2^{11}}$. (Faithful).
Moreover, since $f$ and $g$ are additive ($\F_2$ linear), the functions they generate will also be additive and hence we have a representation of $M_{23}$.

Now since this representation is faithful, it suffices to show that there are no smaller faithful representations and this will imply irreducibility as well.
To do this it suffices to observe that for the size of $M_{23}$ to divide the size of $GL$, we need $23| (2^i-1)$ for some $i$ but this forces $i\geq11$.

\end{proof}

\begin{rem}
Since $M_{23}$ is simple, every non-trivial representation is faithful.
\end{rem}

\begin{rem}
We also tested to see if this representation extends to $M_{24}$ using the generators $f$, $g$ and
$$h= (1,24)(2,23)(3,12)(4,16)(5,18)(6,10)(7,20)(8,14)(9,21)(11,17)(13,22)(15,19),$$ 
which appeared in \cite{M1873} with $B=21A$, $C=A$ and $D=-A$, but $h$ does not seem to be additive for any choice of $\beta$ obtained from $\alpha^5$ by successive squaring. In each case we tested an additivity relation that does not involve $\beta$ and obtained a contradiction of the form $\beta=0$. 
\end{rem}


\section*{Acknowledgements} We are very grateful for the encouragement and support of the parents of our talented young researchers many of whom are not yet even in middle school. Honourable mention to Gavin Tran, Rhianna Kho and Max Lau for participating.

We also acknowledge free resources such as Wikipedia, ATLAS, groupprops, MathStackExchange, MathOverflow and open access journals for their assistance. This project was completed without access to journal subscriptions.

\printbibliography

{\sc Dr Michael Sun's School of Maths, NSW, Australia}

Corresponding author:
\email{humanyimingbing@gmail.com}

%
%
\end{document}